\newcommand{\mathsym}[1]{{}}
\newtheorem{theorem}{Theorem}[section]
\newtheorem{lemma}[theorem]{Lemma}
\theoremstyle{definition}
\newtheorem{rem}[theorem]{Remark}
\newtheorem{probl}[theorem]{Problem}
\theoremstyle{remark}
\numberwithin{equation}{section}
\newcommand{\NN}{\mathbb{N}}
\newcommand{\RR}{\mathbb{R}}
\begin{document}

\title[Determinacy of MP for symmetric algebras of a lc space]{On the determinacy of the moment problem for symmetric algebras of a locally convex space}

\author[M. Infusino, S. Kuhlmann, M. Marshall]{ M. Infusino$^{\dagger}$, S. Kuhlmann$^{\dagger}$, \fbox{ M. Marshall$^{*}$}}
\address{$^{*}$ Department of Mathematics and Statistics,\newline \indent
University of Saskatchewan,\newline \indent
Saskatoon, SK. S7N 5E6, Canada}
\address{$^{\dagger}$Fachbereich Mathematik und Statistik,\newline \indent
Universit\"at Konstanz,\newline \indent
78457 Konstanz, Germany.}
\email{maria.infusino@uni-konstanz.de, salma.kuhlmann@uni-konstanz.de}
\keywords{moment problem; uniqueness; determinacy; symmetric algebras; Suslin spaces; nuclear spaces.}
\subjclass[2010]{Primary 44A60}

\begin{abstract} This note aims to show a uniqueness property for the solution (whenever exists) to the moment problem for the symmetric algebra $S(V)$ of a locally convex space $(V, \tau)$. Let $\mu$ be a measure representing a linear functional $L: S(V)\to\RR$. We deduce a sufficient determinacy condition on $L$ provided that the support of $\mu$ is contained in the union of the topological duals of $V$ with respect to countably many of the seminorms in the family inducing~$\tau$. We compare this result with some already known in literature for such a general form of the moment problem and further discuss how some prior knowledge on the support of the representing measure influences its determinacy. 
\end{abstract}

\maketitle
\noindent\footnotesize{\it Murray Marshall passed away on the 1st of May 2015. He was really passionate about the question addressed in this note and was still working on it in the very last days of his life. We lost a wonderful collaborator and a dear friend. We sorely miss him. (M.~Infusino, S.~Kuhlmann)}\normalsize\\

\section*{Introduction}

Given an integer $d\geq 1$, a linear functional $L:\mathbb{R}[x_1,\dots,x_d]\to\RR$ and a closed subset $K$ of $\RR^d$, the classical $d-$dimensional $K-$moment problem asks whether $L$ can be represented as the integral with respect to a non-negative Radon measure $\mu$ whose support is contained in $K$, i.e., $L(f) = \int f d\mu\,$ $\forall f\in \mathbb{R}[x_1,\dots,x_d]$ and $\mu(\RR^d\setminus K)=0$. If such a measure is unique, then the $K-$moment problem is said to be determinate. In this paper we consider the following general formulation of the moment problem.

Let $A$ be a commutative ring with 1 which is an $\mathbb{R}$-algebra. $X(A)$ denotes the character space
of $A$, i.e., the set of all $\RR-$algebra homomorphisms (that send 1 to~1)
$\alpha : A \rightarrow \mathbb{R}$. For $a \in A$, $\hat{a} : X(A)
\rightarrow \mathbb{R}$ is defined by $\hat{a}(\alpha) = \alpha(a)$.
The only $\RR-$algebra homomorphism from $\mathbb{R}$ to itself is the
identity. $X(A)$ is given the weakest topology such that the
functions $\hat{a}$, $a \in A$, are continuous. 

\begin{probl}\label{MP}
Given a Borel subset $K$ of $X(A)$ and a linear functional $L : A \rightarrow \mathbb{R}$, the $K$-{\it moment problem} asks whether there exists a non-negative Radon measure $\mu$ whose support is contained in $K$ such that $L(a) = \int \hat{a} d\mu\,$ $\forall a\in A$. Such a measure is called a $K$-\emph{representing measure} for $L$ on $A$ and, when it is unique the moment problem is said to be $K$-\emph{determinate}.
\end{probl}
We note that the $d-$dimensional moment problem is a special case of Problem~\ref{MP}. Indeed, $\RR-$algebra homomorphisms from $\mathbb{R}[x_1,\dots,x_d]$ to
$\RR$ correspond to point evaluations $f \mapsto f(\alpha)$, $\alpha
\in \RR^d$ and $X(\mathbb{R}[x_1,\dots,x_d])$ is identified (as a
topological space) with~$\mathbb{R}^d$.

There is a huge literature about moment problems belonging to this general set up (see, e.g., \cite{BCR}, \cite{MGHAS}, \cite{MGES}, \cite{GKM}, \cite{GMW},  \cite{IK-probl}, \cite{L}, \cite{Schmu78}) 
and in particular several works have been devoted to the study of the moment problem for linear functionals defined on the symmetric algebra of certain locally convex spaces (see, e.g.,\! \cite[Chapter~5, Section~2]{BK}, \cite{BS}, \cite{Bor-Yng75}, \cite{Heg75}, \cite{I}, \cite{IKR}, \cite[Section 12.5]{Schmu90}). In \cite{GIKM} we deal with the general case of linear functionals defined on the symmetric algebra $S(V)$ of any locally convex space $(V,\tau)$. More precisely, we provide necessary and sufficient conditions for the existence of a solution to the $K$-moment problem for linear functionals on $S(V)$ which are continuous with respect to the finest locally multiplicatively convex topology $\bar{\tau}$ on $S(V)$ extending $\tau$.

This note is a follow up of \cite{GIKM} and focuses on the determinacy of the moment problem for linear functionals on the symmetric algebra of a locally convex space. Our investigation was indeed motivated by the observation that the continuity assumption in \cite{GIKM} is actually a quite strong determinacy condition with respect to some other ones known in literature for such a problem (c.f. \cite[Remark 6.2 (9), (10)]{GIKM}). These conditions are essentially based on quasi-analytic bounds on the starting linear functional provided the knowledge of some further properties on the support of the representing measure. The aim of this note is to better clarify this deep relation between the knowledge of the support and the generality of the determinacy condition.

In Section 1 we derive some general properties of the Borel $\sigma-$algebra on the topological dual of a locally convex space as consequences of the Banach-Alaoglu-Bourbaki theorem. 

In Section 2 we first introduce our set up for the graded symmetric algebra $S(V)$ of a locally convex space $V$ over $\RR$. Note that we consider the general case when the locally convex topology on $V$ is generated by a possibly uncountable family of seminorms $S$. Then we show, in Theorem \ref{at most one-general}, a sufficient determinacy condition on $L:S(V)\to\RR$ provided that the support of its representing measure is contained in the union of countably many topological duals $V_{\rho}'$ of $V$ with $\rho \in S$. 

In Section 3 we recall one of the most used determinacy result for the moment problem for linear functionals on a special class of locally convex spaces and we observe that actually holds for all the locally convex spaces considered in our Theorem~\ref{at most one-general}. Therefore, we compare these two general results (Theorem \ref{at most one-general} and Theorem \ref{uniqueness-Suslin}) both on the level of the generality of the determinacy conditions and on the level of the pre-knowledge of the support. In the end, we also compare these two theorems applied to the special sort of nuclear locally convex spaces considered, e.g.,\! in \cite{{BK}, {BS}, {I}, {IKR}} and for which a classical theorem by Berezanky, Kondratiev and \v Sifrin provides the existence of a solution to the moment problem with very precise support properties.

\section{Borel sets in the topological dual of a locally convex space}

Let $V$ be an $\mathbb{R}$-vector space and let $S$ be a separating family of seminorms on $V$. We denote by $\tau$ the locally convex (l.c.) topology on $V$ defined by $S$, that is the coarsest topology on $V$ such that all the seminorms in $S$ are continuous. As the topology $\tau$ does not change if we close up $S$ under taking the maximum of finitely many of its elements, we can assume w.l.o.g. that $S$ is directed, i.e.,
$$\forall \rho_1, \rho_2\in S,\,\exists\, \rho\in S, C>0\,\text{ s.t. }\,C\rho(v)\geq \max\{\rho_1(v), \rho_2(v)\},\,\forall\,v\in V.$$

Let $V^*$ be the algebraic dual of $V$ and let us denote by $V'$ the topological dual of $V$, i.e.,\! the subspace of $V^*$ consisting of all $\tau$-continuous elements of $V^*$. For $\rho \in S$, denote by $V_{\rho}'$ the subspace of $V^*$ consisting of all $\rho$-continuous elements of $V^*$. Denote by $\rho'$ the norm on $V_{\rho}'$ defined by $$\rho'(x) := \inf \{ C\geq 0 : |x(f)| \leq C\rho(f)\,, \forall f \in V\}.$$ Then $V_{\rho}'$ is a countable increasing union of balls $$B_i(\rho') := \{ x \in V^* : \rho'(x) \le i\}, \ i = 1,2, \dots.$$ By the Banach-Alaoglu-Bourbaki theorem (see, e.g., \cite[Theorem~(4), p. 248]{Koethe}, \cite[Theorem~3.15]{Rud}), each $B_i(\rho')$ is compact in $V'$ endowed with the weak-* topology $\sigma(V', V)$ and therefore compact in $(V^*, \sigma(V^*, V))$. 
Hence, for any $\rho\in S$ we have that $V_{\rho}'$ is a Borel subset of $(V^*, \sigma(V^*, V))$. Furthermore, it is well-known that a linear functional $L : V \rightarrow \mathbb{R}$ is $\tau$-continuous if and only if there exists $\rho \in S$ such that $L$ is $\rho$-continuous (for a proof see, e.g.,\! \cite[Lemma 4.1]{GIKM}). Hence, $V' = \bigcup_{\rho \in S} V_{\rho}'$. \\

If $S$ is countable, then the general observations above give that $V'$ is a countable union of compact sets and so $V'$ is a Borel set in $V^*$. Here, Borel set in $V^*$ means that it belongs to the Borel $\sigma$-algebra generated by  the open sets in the weak-* topology $\sigma(V^*, V)$ on $V^*$.

\begin{lemma}\label{lemma1} Let $V$ be a $\RR-$vector space endowed with the l.c.\! topology defined by a directed separating family $S$ of seminorms on $V$. If $S$ is countable, then the $\sigma$-algebra of Borel sets of $V'$ coincides with the $\sigma$-algebra of subsets of $V'$ generated by all sets $U_{i,\rho}$, $i \in \NN$, $\rho \in S$ such that $U_{i,\rho}$ is an open subset of $B_i(\rho')$ in the weak-* topology.
\end{lemma}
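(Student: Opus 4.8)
The plan is to establish the two inclusions separately. Write $\mathcal{A}$ for the $\sigma$-algebra of subsets of $V'$ generated by the family of all sets $U_{i,\rho}$ ($i\in\NN$, $\rho\in S$) that are open in $B_i(\rho')$ for the weak-* topology, and $\mathcal{B}(V')$ for the Borel $\sigma$-algebra of $V'$ (equivalently, the $\sigma$-algebra generated by the $\sigma(V',V)$-open subsets of $V'$). I will freely use two facts already recorded in the preamble to this section: that each $B_i(\rho')$ is $\sigma(V',V)$-compact by the Banach--Alaoglu--Bourbaki theorem, and that $V'=\bigcup_{\rho\in S}\bigcup_{i\in\NN}B_i(\rho')$, which is a \emph{countable} union because $S\times\NN$ is countable. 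I will also use that $(V',\sigma(V',V))$ is Hausdorff --- the functions $\hat f$, $f\in V$, already separate the points of $V^*$ --- so that each compact $B_i(\rho')$ is in fact $\sigma(V',V)$-closed.

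For the inclusion $\mathcal{A}\subseteq\mathcal{B}(V')$ it suffices to show that each generator lies in $\mathcal{B}(V')$. A set $U_{i,\rho}$ that is open in $B_i(\rho')$ has the form $U_{i,\rho}=W\cap B_i(\rho')$ with $W$ open in $(V',\sigma(V',V))$, by definition of the subspace topology; since $W$ is Borel and $B_i(\rho')$ is closed, hence Borel, in $V'$, so is $U_{i,\rho}$.

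For the reverse inclusion $\mathcal{B}(V')\subseteq\mathcal{A}$ it suffices to show that every $\sigma(V',V)$-open subset $O$ of $V'$ lies in $\mathcal{A}$, as such sets generate $\mathcal{B}(V')$. Intersecting $O$ with the countable cover $V'=\bigcup_{\rho\in S}\bigcup_{i\in\NN}B_i(\rho')$ yields
$$O=\bigcup_{\rho\in S}\bigcup_{i\in\NN}\bigl(O\cap B_i(\rho')\bigr),$$
a countable union each of whose terms $O\cap B_i(\rho')$ is open in $B_i(\rho')$ and hence is one of the generating sets $U_{i,\rho}$; thus $O\in\mathcal{A}$. (Taking $O=V'$, or equivalently observing that each $B_i(\rho')$ is open in itself, shows moreover that $V'\in\mathcal{A}$, so $\mathcal{A}$ is indeed a $\sigma$-algebra on $V'$.) Combining the two inclusions gives the claim.

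I do not expect a genuine obstacle here; the one point to watch is that the countability hypothesis on $S$ is used precisely --- and only --- in the last step, where it lets an arbitrary open set be written as a \emph{countable} union of relatively open slices of the balls $B_i(\rho')$, a decomposition that is admissible in a $\sigma$-algebra. An uncountable index set $S$ would break this argument, which is consistent with the lemma being stated only for countable $S$. Everything else is a routine use of the Banach--Alaoglu--Bourbaki compactness already invoked above together with the Hausdorffness of the weak-* topology.
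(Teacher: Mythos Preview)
Your proof is correct and follows essentially the same route as the paper: both establish the two inclusions by (i) writing a relatively open $U_{i,\rho}$ as the intersection of an open set with the compact (hence closed) ball $B_i(\rho')$, and (ii) decomposing an arbitrary open $O\subseteq V'$ as the countable union $\bigcup_{i,\rho}O\cap B_i(\rho')$, using countability of $S$ exactly here. Your explicit mention of Hausdorffness to pass from compact to closed is a nice touch that the paper leaves implicit.
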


\begin{proof} Denote the two $\sigma$-algebras by $\Sigma_1$ and $\Sigma_2$ respectively. If $U$ is open in $V'$ then $U$ is the union of the sets $U\cap B_i(\rho')$, $i \in \NN$, $\rho \in S$. Since $S$ is countable, this union is countable. Since each $U \cap B_i(\rho')$ is relatively open in $B_i(\rho')$, this proves $U \in \Sigma_2$. This implies $\Sigma_1 \subseteq \Sigma_2$. On the other hand, each relatively open $U_{i,\rho}$ in $B_i(\rho')$ is expressible as $U_{i,\rho} = U\cap B_i(\rho')$ where $U$ is open in $V'$. Since $B_i(\rho')$ is compact (and hence closed in $V'$), by the Banach-Alaoglu-Bourbaki theorem, this implies $U_{i,\rho} \in \Sigma_1$. This proves $\Sigma_2 \subseteq \Sigma_1$.
\end{proof}

Suppose now that $V$ is separable in the sense that there exists a countable dimensional subspace $V_0$ of $V$ which is $\rho$-dense in $V$ for each $\rho \in S$. Each $\rho \in S$ restricts to a seminorm on $V_0$. Thus we can form the corresponding objects $(V_0)^*$, $(V_0)_{\rho}'$, $(V_0)'$, etc. Density implies that the natural restriction $V_{\rho}' \rightarrow (V_0)_{\rho}'$ is a bijection, for each $\rho \in S$.

\begin{lemma} \label{lemma2} Let $V$ be a $\RR-$vector space endowed with the l.c.\! topology defined by a separating family $S$ of seminorms on $V$. If $S$ is countable and $V$ is separable, then the $\sigma$-algebra of Borel sets of $V'$ is canonically identified with the $\sigma$-algebra of Borel subsets of $(V_0)'$, where $V_0$ is a countable dimensional dense subspace of~$V$.
\end{lemma}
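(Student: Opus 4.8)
The plan is to reduce everything to Lemma~\ref{lemma1} by means of the restriction map. As at the start of Section~1 we may assume $S$ is directed; then the family $S|_{V_0}:=\{\rho|_{V_0}:\rho\in S\}$ of restricted seminorms is a countable directed separating family of seminorms on $V_0$, inducing on $V_0$ exactly the subspace topology, so Lemma~\ref{lemma1} applies verbatim to $V_0$ as well as to $V$. The canonical identification in the statement is the restriction map $r:V'\to (V_0)'$, $x\mapsto x|_{V_0}$. By the remark preceding the lemma, $r$ carries $V_\rho'$ bijectively onto $(V_0)_\rho'$ for every $\rho\in S$; since $V'=\bigcup_{\rho\in S}V_\rho'$ and $(V_0)'=\bigcup_{\rho\in S}(V_0)_\rho'$, the map $r$ is a bijection of $V'$ onto $(V_0)'$.

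The key point is that $r$ identifies the balls together with their weak-$*$ topology. First, for $x\in V_\rho'$ one has $\rho'(x)=(\rho|_{V_0})'(x|_{V_0})$: the inequality $(\rho|_{V_0})'(x|_{V_0})\le\rho'(x)$ is immediate from the definition of $\rho'$, while the reverse inequality follows because, given $C$ with $|x(v_0)|\le C\,\rho(v_0)$ for all $v_0\in V_0$, the $\rho$-continuity of $x$ together with the $\rho$-density of $V_0$ in $V$ lets one pass to the limit and obtain $|x(v)|\le C\,\rho(v)$ for all $v\in V$. Hence $r$ maps $B_i(\rho')$ bijectively onto $B_i((\rho|_{V_0})')$. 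Moreover $r$ is continuous when $V'$ and $(V_0)'$ carry their weak-$*$ topologies, since for each $v_0\in V_0\subseteq V$ the evaluation $x\mapsto x(v_0)$ is $\sigma(V',V)$-continuous. As $B_i(\rho')$ is $\sigma(V^*,V)$-compact by the Banach--Alaoglu--Bourbaki theorem and $B_i((\rho|_{V_0})')$ is Hausdorff, the restriction of $r$ to $B_i(\rho')$ is a homeomorphism onto $B_i((\rho|_{V_0})')$.

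We then conclude as follows. By Lemma~\ref{lemma1}, the Borel $\sigma$-algebra of $V'$ is generated by the family $\mathcal{U}$ of all sets that are relatively weak-$*$ open in some ball $B_i(\rho')$, and the Borel $\sigma$-algebra of $(V_0)'$ is generated by the analogous family $\mathcal{W}$ sitting inside the balls $B_i((\rho|_{V_0})')$. Since $r$ restricts to a homeomorphism $B_i(\rho')\to B_i((\rho|_{V_0})')$ for all $i,\rho$, it carries $\mathcal{U}$ bijectively onto $\mathcal{W}$; being itself a bijection of $V'$ onto $(V_0)'$, it therefore carries the $\sigma$-algebra generated by $\mathcal{U}$ onto the $\sigma$-algebra generated by $\mathcal{W}$, which is precisely the asserted canonical identification.

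I expect the main obstacle to be the middle step: one must verify that passing from $V$ to the dense subspace $V_0$ changes neither the gauge $\rho'$ nor, more subtly, the weak-$*$ topology on each ball $B_i(\rho')$. The reason Lemma~\ref{lemma1} is genuinely needed is that $r$ need not be a homeomorphism of $V'$ onto $(V_0)'$ globally --- the weak-$*$ topology on $V'$ is in general strictly finer than the inductive-limit topology of the increasing sequence of compact balls --- so the coincidence of the Borel structures cannot be deduced from a global topological statement and must instead be assembled ball by ball.
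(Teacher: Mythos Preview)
Your proof is correct and follows the same overall architecture as the paper's: assume $S$ directed, invoke Lemma~\ref{lemma1} on both sides, and reduce the identification of Borel $\sigma$-algebras to showing that the restriction map is a homeomorphism from each ball $B_i(\rho')$ onto the corresponding ball $B_i((\rho|_{V_0})')$.

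The one genuine difference lies in how that ball-by-ball homeomorphism is established. The paper appeals to the Sequential Banach--Alaoglu theorem: it fixes a countable dense subset $f_1,f_2,\dots$ of $V_0$, writes down the explicit metric
\[
d(x,y)=\sum_{n=1}^\infty 2^{-n}\frac{|\langle x-y,f_n\rangle|}{1+|\langle x-y,f_n\rangle|},
\]
and observes that this same metric induces the weak-$*$ topology on both $B_i(\rho')$ and on the corresponding ball in $(V_0)'$. You instead argue that $r$ is weak-$*$ continuous, that $B_i(\rho')$ is compact (Banach--Alaoglu--Bourbaki), and that the target ball is Hausdorff, so the continuous bijection is automatically a homeomorphism. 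Your route is slightly more economical---it avoids introducing the metric and the second invocation of Sequential Banach--Alaoglu---while the paper's route has the minor advantage of exhibiting the common metric explicitly. Either way the conclusion is the same, and your additional verification that $\rho'(x)=(\rho|_{V_0})'(x|_{V_0})$ (which the paper leaves implicit) is a welcome clarification.
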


\begin{proof} W.l.o.g. we can assume that $S$ is directed. In view of Lemma \ref{lemma1}, it suffices to show, for each $\rho \in S$ and each $i \in \NN$, that the ball $B_i(\rho')$ in $V_{\rho}'$ is homeomorphic in the weak-* topology to the corresponding ball in $(V_0)_{\rho}'$. Fix a countable dense subset $f_1,f_2,\dots$ of $V_0$. By the Sequential Banach-Alaoglu theorem  \cite[Theorem~3.16]{Rud}, we know that the weak-* topology on $B_i(\rho')$ is the topology defined by the metric $$d(x,y) := \sum_{n=1}^{\infty} 2^{-n}\frac{|\langle x-y,f_n\rangle|}{1+|\langle x-y,f_n\rangle|}.$$ Here, $\langle x,f\rangle := x(f)$. Note that $B_i(\rho') = B_1(\frac{1}{i}\rho')$ and $\frac{1}{i}\rho' = (i\rho)'$. A second application of the Sequential Banach-Alaoglu theorem shows that the weak-* topology on the corresponding ball in $(V_0)'$ is the metric topology defined by exactly the same metric. It follows that these two topologies coincide.
\end{proof}

\section{A determinacy condition for the moment problem on $S(V)$}\label{SecDetCond}
Let us briefly recall the basic notations we are going to use in the following. Let $V$ be an $\mathbb{R}$-vector space. We denote by $S(V)$ the symmetric algebra of~$V$, i.e., the tensor algebra $T(V)$ factored by the ideal generated by the elements $v\otimes w -w\otimes v$, $v,w\in V$. If we fix a basis $x_i$, $i\in \Omega$ of $V$, then  $S(V)$ is identified with the polynomial ring $\mathbb{R}[x_i : i\in \Omega]$, i.e., the free $\mathbb{R}$-algebra in commuting variables $x_i$, $i\in \Omega$. The algebra $S(V)$ is a graded algebra. For any integer $k\ge 0$, denote by $S(V)_k$ the $k$-th homogeneous part of $S(V)$, i.e.,\! the image of $k$-th homogeneous part $V^{\otimes k}$ of $T(V)$ under the canonical map $\sum_{i=1}^n f_{i1}\otimes \cdots \otimes f_{ik} \mapsto \sum_{i=1}^n  f_{i1}\cdots f_{ik}.$ Here, $f_{ij} \in V$ for  $i=1,\dots, n$, $j=1,\dots,k$ and $n\ge 1$. Note that $S(V)_0 = \mathbb{R}$ and $S(V)_1 = V$.

The character space $X(S(V))$ of $S(V)$ can be identified with the algebraic dual $V^*= \operatorname{Hom}(V,\mathbb{R})$ of $V$ by identifying $\alpha \in X(S(V))$ with $v^* \in V^*$ if $v^* = \alpha|_V$.
The topology on $V^*$ is the weak-* topology $\sigma(V^*, V)$, i.e.,\! the weakest topology such that $v^* \in V^*\mapsto v^*(f)\in \mathbb{R}$ is continuous $\forall f\in V$. 
Suppose now that $V$ is endowed with a locally convex topology $\tau$ defined by some (possibly uncountable) family of seminorms $S$ which can be always assumed to be directed. Then $V'$ is the (possibly uncountable) union of the subspaces $V_{\rho}'$, $\rho \in S$. In this setting Lemma \ref{lemma2} gives the following result about the uniqueness of the representing measure for a linear functional on $S(V)$ under the fundamental assumption that its support is contained only in countably many $V_{\rho}'$.
\begin{theorem}\label{at most one-general} 
Let $(V, \tau)$ be a separable l.c.\! topological space over $\RR$, where $\tau$ is defined by some (possibly uncountable) separating family $S$ of seminorms. Suppose that $\{ x_j : j \in \NN\}$ is a basis of a countable dimensional dense subspace $V_0$ of $V$ and that $L : S(V) \rightarrow \mathbb{R}$ is a linear functional such that the following condition holds for each $j \in \NN$: \begin{equation}\label{m}
\exists \text{ a sequence } \{p_{jk}\}_{k=1}^{\infty} \text{ in } S(V_0)\otimes \mathbb{C} \text{ such that }  \lim\limits_{k\rightarrow\infty}L(|1-(x_j-\mathrm{i})p_{jk}|^2)=0.
\end{equation}
Let $K:=\bigcup_{\rho\in T}V'_\rho$ for a countable subset $T$ of $S$. If there exists a $K-$representing measure for $L$ on $S(V)$, then this measure is unique.\end{theorem}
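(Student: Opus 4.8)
The plan is to push the question down to the countably generated dense subspace $V_0$, where it becomes a determinacy statement for a moment functional on a polynomial ring, and to settle that by a direct computation with signed measures. For this I would first use the material of Section 1. Consider the restriction map $r\colon V^*\to(V_0)^*$, $\varphi\mapsto\varphi|_{V_0}$, which is continuous for the weak-$*$ topologies since each evaluation $\varphi\mapsto\varphi(f)$, $f\in V_0$, is. As recalled just before Lemma \ref{lemma2}, density of $V_0$ makes $r$ injective on $V'$ and sends each $V_\rho'$ bijectively onto $(V_0)_\rho'$, so $r$ carries $K=\bigcup_{\rho\in T}V_\rho'$ bijectively onto $K_0:=\bigcup_{\rho\in T}(V_0)_\rho'\subseteq(V_0)'$. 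Since $K$ is the union of the countably many weak-$*$ compacta $B_i(\rho')$ ($i\in\NN$, $\rho\in T$), on each of which $r$ is a homeomorphism onto a compact set, $r|_K$ is a Borel isomorphism of $K$ onto $K_0$. Hence pushing forward along $r$ turns any $K$-representing measure for $L$ on $S(V)$ into a Radon measure $\mu$ on $(V_0)^*\cong\RR^{\NN}$ (identifying $\varphi$ with $(\varphi(x_j))_j$), concentrated on $K_0$, with finite moments, representing the moment functional $L_0:=L|_{S(V_0)}$ on $\mathbb R[x_j:j\in\NN]$; under this identification condition \eqref{m} becomes $\int_{\RR^{\NN}}|1-(x_j-\mathrm i)p_{jk}|^2\,d\mu\to 0$. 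Because $r|_K$ is a Borel isomorphism, it now suffices to show that $L_0$ has at most one representing measure of this kind, and since \eqref{m} is a statement about $L$ alone it holds for every such measure simultaneously.

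So let $\mu_1,\mu_2$ be two such representing measures for $L_0$, put $\lambda:=\mu_1+\mu_2$ and $\sigma:=\mu_1-\mu_2$, so that $|\sigma|\le\lambda$, every polynomial lies in $L^2(\lambda)$, and $\int x^\alpha\,d\sigma=0$ for every finitely supported multi-index $\alpha$. From \eqref{m} we get $(x_j-\mathrm i)p_{jk}\to 1$ in $L^2(\lambda)$, hence (as $|x_j-\mathrm i|\ge 1$) $p_{jk}\to(x_j-\mathrm i)^{-1}$ in $L^2(\lambda)$, so also in $L^2(|\tau|)$ for every finite signed Borel measure $\tau$ with $|\tau|\le\lambda$; conjugating, $\overline{p_{jk}}\to(x_j+\mathrm i)^{-1}$ likewise. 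The crux is the following stability: if $\tau$ is a finite signed Borel measure with $|\tau|\le\lambda$ and $\int x^\alpha\,d\tau=0$ for all $\alpha$, then the same holds for $(x_j\pm\mathrm i)^{-1}\tau$. The variation bound is immediate since $|x_j\pm\mathrm i|^{-1}\le 1$; for the moments one reduces, by polynomial division in $x_j$ (the polynomial remainder has $\tau$-integral $0$), to the case $\alpha_j=0$, where $\int x^\alpha(x_j-\mathrm i)^{-1}\,d\tau=\lim_k\int x^\alpha p_{jk}\,d\tau=0$, because $x^\alpha p_{jk}$ is a polynomial and $x^\alpha p_{jk}\to x^\alpha(x_j-\mathrm i)^{-1}$ in $L^1(|\tau|)$ by the Cauchy--Schwarz inequality (using $x^\alpha\in L^2(|\tau|)$); the case of $(x_j+\mathrm i)^{-1}$ is identical.

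Iterating this, starting from $\tau=\sigma$, shows that $\int m\,d\sigma=0$ for every finite product $m=\prod_l(x_{j_l}-\mathrm i\varepsilon_l)^{-1}$ with $\varepsilon_l\in\{+1,-1\}$; by linearity $\sigma$ annihilates the unital $*$-subalgebra $\mathcal B\subseteq C_b(\RR^{\NN})$ generated by the functions $(x_j\pm\mathrm i)^{-1}$, $j\in\NN$. Now the map $\iota\colon\RR^{\NN}\to\mathcal C:=\prod_j\{z\in\mathbb C:|z-\tfrac{\mathrm i}{2}|\le\tfrac12\}$, $x\mapsto\big((x_j-\mathrm i)^{-1}\big)_j$, is a continuous injection into a compact metrizable space, and the members of $\mathcal B$ are exactly the functions $g\circ\iota$ with $g$ a polynomial in the coordinates $w_j,\overline{w_j}$ of $\mathcal C$; such polynomials form a unital, conjugation-closed, point-separating subalgebra of $C(\mathcal C)$, hence are dense by the Stone--Weierstrass theorem. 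Since $\int_{\mathcal C}g\,d(\iota_*\sigma)=\int_{\RR^{\NN}}(g\circ\iota)\,d\sigma=0$ for all such $g$ and $\iota_*\sigma$ is a finite signed measure on the compact metric space $\mathcal C$, it follows that $\iota_*\sigma=0$, and therefore $\sigma=0$, since $\iota$ is an injective Borel map between Polish spaces. Thus $\mu_1=\mu_2$, which by the first reduction is the claimed uniqueness.

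I expect the main obstacle to be precisely the stability step of the second paragraph. Multiplication by $x_j$ is unbounded on $L^2(\lambda)$, so one cannot simply deduce from \eqref{m} that polynomials are dense in $L^2(\lambda)$---this can genuinely fail, already for indeterminate one-dimensional moment problems---nor that the polynomials form a core for these operators; the trick of staying inside the family of signed measures dominated by $\lambda$ and lowering the exponent of $x_j$ by division \emph{before} passing to the limit is what gets around this, after which only the boundedness of the resolvents $(x_j\pm\mathrm i)^{-1}$ and Stone--Weierstrass on a compact product of disks are needed. A secondary point, handled by Section 1, is that the first paragraph really uses that only countably many seminorms occur, so that $K$ is $\sigma$-compact and $r|_K$ a Borel isomorphism even when $S$ is uncountable.
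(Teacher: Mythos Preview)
Your reduction in the first paragraph---pushing the problem down to $(V_0)^*\cong\RR^{\NN}$ via the restriction map, using that $K$ is a countable union of weak-$*$ compact balls on each of which $r$ is a homeomorphism---is exactly the content of Lemma~\ref{lemma2} and matches what the paper does. From that point on the paper simply invokes \cite[Corollary~4.5]{GKM} for the determinacy of $L_0$ on $\RR[x_j:j\in\NN]$ under condition~\eqref{m}, whereas you supply a self-contained proof of that fact. Your argument is correct: the stability step (passing from $\tau$ to $(x_j\pm\mathrm i)^{-1}\tau$ within the class of measures dominated by $\lambda$ and annihilating all polynomials, by reducing the $x_j$-degree via division before taking the $L^2$-limit) is precisely the device that avoids any unjustified density of polynomials in $L^2(\lambda)$, and the Stone--Weierstrass step on the compact product of disks then finishes cleanly. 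One small terminological slip: after the first application of the stability step the measures $(x_j\pm\mathrm i)^{-1}\tau$ are complex rather than signed, so the induction should be phrased for finite complex measures with $|\tau|\le\lambda$; the argument is unaffected.

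What each approach buys: the paper's proof is two lines but defers the substance to an external reference; yours unpacks that reference and makes transparent why condition~\eqref{m} is exactly the hypothesis needed---it says the bounded resolvents $(x_j\pm\mathrm i)^{-1}$ are $L^2(\lambda)$-limits of polynomials, which is all one needs to pass from polynomial moments to a separating algebra of bounded functions. Your version also yields a slightly stronger conclusion, namely uniqueness among \emph{all} representing measures for $L_0$ on $\RR^{\NN}$, not just those concentrated on $K_0$.
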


\begin{proof}
Let $T$ be a countable subset of $S$. By Lemma \ref{lemma2} applied to $V$ endowed with the l.c.\! topology generated by $T$, we have that the $\sigma$-algebra of Borel subsets of $(V_0)'$ coincides with the $\sigma$-algebra of Borel subsets of $\bigcup_{\rho\in T}V'_\rho$. Then the conclusion follows by applying \cite[Corollary 4.5]{GKM} to the $\mathbb{R}$-algebra $S(V_0) = \mathbb{R}[x_j : j \in \NN]$. Note that  the notion of constructibly Radon measure appearing in \cite[Corollary~4.5]{GKM} coincides in this case with the one of Radon measure
since $V_0$ is countable dimensional.
\end{proof}

Note that when $S$ is itself countable the previous theorem guarantees there is at most one $V'-$representing measure for $L$ on $S(V)$.

\section{The influence of the knowledge of the support in the determinacy question on $S(V)$}

In this section we further discuss Theorem \ref{at most one-general} collocating it in the framework of the determinacy of the moment problem for symmetric algebras of a l.c. space $V$ over $\RR$ (i.e., Problem \ref{MP} for $A=S(V)$; as mentioned in Section~\ref{SecDetCond} in this case $X(A)$ is identified with $V^*$). In \cite[Theorem 3.6]{I}, the author gives a general proof for a determinacy result which is commonly used for the moment problem for the symmetric algebras of a special class of l.c.\! nuclear spaces (we will introduce it later on). However, such a proof perfectly works also for the whole class of spaces which we have considered in Theorem \ref{at most one-general}. Indeed, the following holds.

\begin{theorem}\label{uniqueness-Suslin}
Let $(V,\tau)$ be a separable Hausdorff l.c.\! space over $\RR$. Suppose that $V'$ is a Suslin space and $L: S(V)\to \mathbb{R}$ is a linear functional such that $L$ is determining. Then there is at most one $V'-$representing measure for $L$ on $S(V)$.
\end{theorem}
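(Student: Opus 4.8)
The plan is to reduce the (possibly infinite‑dimensional, possibly non‑metrizable) moment problem on $S(V)$ to a countable system of classical finite‑dimensional moment problems on Euclidean spaces, using the hypothesis that $V'$ is Suslin to carry the representing measures over to the Polish space $\RR^{\NN}$, where a marginal argument applies. First, since $V$ is separable, fix a countable‑dimensional $\tau$‑dense subspace $V_0\subseteq V$ with basis $\{x_j:j\in\NN\}$. Every $\ell\in V'$ is $\tau$‑continuous, hence determined by its restriction to the dense subspace $V_0$, i.e.\ by the numbers $\ell(x_j)$; consequently the weak‑$*$ continuous functions $\widehat{x_j}\colon V'\to\RR$, $\ell\mapsto\ell(x_j)$, separate the points of $V'$, and the map $\Phi\colon V'\to\RR^{\NN}$, $\Phi(\ell):=(\ell(x_j))_{j\in\NN}$, is continuous and injective.

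Next comes the step where the Suslin assumption is used. Since $V'$ is Suslin, $\RR^{\NN}$ is Polish, and $\Phi$ is continuous and injective, $\Phi$ is a Borel isomorphism of $V'$ onto its image $\Phi(V')\subseteq\RR^{\NN}$, which is an analytic (Suslin) subset of $\RR^{\NN}$; this is a standard consequence of the Lusin--Souslin theorem in the form valid for Suslin spaces (together with the fact that a bi‑analytic set is Borel), and the relevant measure‑theoretic background may be quoted from \cite{I}. Moreover, every finite Borel measure on a Suslin space is Radon. So, given two $V'$‑representing measures $\mu_1,\mu_2$ for $L$, each of which is concentrated on the Suslin set $V'$, we may push them forward along $\Phi$ to finite Radon measures $\nu_i:=\Phi_*\mu_i$ on $\RR^{\NN}$. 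For every monomial one computes $\int_{\RR^{\NN}}t_{j_1}\cdots t_{j_m}\,d\nu_i=\int \ell(x_{j_1})\cdots\ell(x_{j_m})\,d\mu_i(\ell)=L(x_{j_1}\cdots x_{j_m})$, so $\nu_1$ and $\nu_2$ are both representing measures on $\RR^{\NN}$ for the restriction $\widetilde L:=L|_{S(V_0)}$, regarded as a functional on $\RR[x_j:j\in\NN]$; in particular $\nu_1(\RR^{\NN})=\nu_2(\RR^{\NN})=L(1)$.

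Finally, reduce to finite dimensions. The cylinder sets $\pi_n^{-1}(B)$, with $\pi_n\colon\RR^{\NN}\to\RR^n$ the projection onto the first $n$ coordinates and $B$ a Borel subset of $\RR^n$, form a generating $\pi$‑system for the Borel $\sigma$‑algebra of $\RR^{\NN}$; since $\nu_1,\nu_2$ are finite of equal total mass, $\nu_1=\nu_2$ will follow once $(\pi_n)_*\nu_1=(\pi_n)_*\nu_2$ for all $n$. But $(\pi_n)_*\nu_i$ is a representing measure on $\RR^n$ for the restriction of $\widetilde L$ to $\RR[x_1,\dots,x_n]$, and this finitely generated moment problem is determinate — this is exactly what the hypothesis that $L$ is \emph{determining} supplies: the quasi‑analyticity bounds on the $x_j$ that define this notion (of the type appearing in \eqref{m}; see \cite{I}) force the multiplication operators by $x_1,\dots,x_n$ to be essentially self‑adjoint in the GNS representation attached to $L$, whence uniqueness of their joint spectral measure and so determinacy of the $n$‑dimensional moment problem (the classical commutative criteria in the spirit of Nussbaum, Petersen and Fuglede). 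Hence $(\pi_n)_*\nu_1=(\pi_n)_*\nu_2$ for all $n$, so $\nu_1=\nu_2$, and transporting back through the Borel isomorphism $\Phi|_{V'}$ gives $\mu_1=\mu_2$ on $V'$; as both are concentrated on $V'$, $\mu_1=\mu_2$.

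I expect the main obstacle to be the transfer in the second paragraph: one must make sure that ``the support of $\mu$ is contained in $V'$'' — with $V'$ only a Suslin, possibly non‑Borel subset of $V^*=X(S(V))$ — genuinely produces a finite Radon measure that can be pushed to $\RR^{\NN}$ with unchanged moments, and that the continuous bijection $\Phi|_{V'}$ really is a Borel isomorphism onto its analytic image. Both points rest on the good measure‑theoretic behaviour of Suslin spaces (finite Borel measures are Radon; injective continuous images are analytic; bi‑analytic sets are Borel), and this is precisely where the Suslin hypothesis is indispensable and where it takes over the role played by nuclearity in the classical formulation — it cannot be weakened to, say, metrizability or Lindelöfness of $V'$. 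The remaining ingredient, that the abstract ``determining'' property yields finite‑dimensional determinacy, is classical but should be cited verbatim from \cite{I}.
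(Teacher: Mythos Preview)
The paper does not supply its own proof of this theorem; it merely records that the argument of \cite[Theorem~3.6]{I}, written there for a special class of nuclear spaces, goes through unchanged under the stated hypotheses. Your proposal is a correct reconstruction of that argument: embed $V'$ injectively and continuously into $\RR^{\NN}$ via the countable separating family $(\widehat{x_j})_j$ coming from the dense subspace $V_0$, use the Suslin hypothesis to guarantee that the Borel structure of $V'$ is captured by this embedding, push the two representing measures forward, and reduce to determinacy of the finite-dimensional marginals, which the determining hypothesis yields through the classical multivariate Carleman--Nussbaum--Petersen criterion.

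Two small corrections of detail. First, you describe the determining bounds as being ``of the type appearing in~\eqref{m}''; but \eqref{m} is the density condition used in Theorem~\ref{at most one-general}, not the quasi-analytic growth condition that defines \emph{determining}. The two are linked only through the chain of implications in Remark~\ref{above}, which requires positivity of $L$ (automatic here, since $L$ has a representing measure) --- so keep them separate. Second, the back-transport in your second paragraph is more cleanly justified by Blackwell's theorem than by a ``Borel isomorphism onto an analytic image'': on a Suslin space any countably generated sub-$\sigma$-algebra that separates points coincides with the full Borel $\sigma$-algebra, so the $\sigma$-algebra generated by the $\widehat{x_j}$ already equals $\mathcal{B}(V')$, and $\Phi_*\mu_1=\Phi_*\mu_2$ gives $\mu_1=\mu_2$ directly. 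With these adjustments the argument is complete and matches the route the paper invokes from~\cite{I}.
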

Denote by $S$ a directed family of seminorms inducing the topology $\tau$ on $V$ and recall that:
\begin{itemize}
\item a Hausdorff topological space is said to be a \emph{Suslin space} if it is the image of a completely metrizable separable space under a continuous mapping.
\item a linear functional $L: S(V)\to \mathbb{R}$ is said to be \emph{determining} if:
\begin{enumerate}[(a)]
\item for each $k\ge 0$ the restriction map $L : S(V)_k \rightarrow \mathbb{R}$ is continuous w.r.t.\! the l.c.\! topology $\overline{\tau}_k$ on $S(V)_k$ induced by the seminorms $\{\overline{\rho}_k : \rho \in \mathcal{S}\}$,
where $\overline{\rho}_k$ denotes the quotient seminorm on $S(V)_k$ induced by $\rho^{\otimes k}$.
\item\label{d} there exists a countable subset $E$ of $V$ whose linear span is dense in $(V,\tau)$ such that, if
$$m_0 := \sqrt{L(1)}, \text{ and } m_d := \sqrt{\sup_{f_1, \dots ,f_{2d} \in E} |L(f_1\dots f_{2d})|}, \text{ for } d \ge 1,$$ then the class $C\{ m_k\}$
is quasi-analytic (see \cite[Definition 1.2]{I}).
\end{enumerate}
For more details about this condition, see \cite[Section 3 and 6]{GIKM} in particular \cite[Remark 6.2, (3)]{GIKM}. 
\item a linear functional $L: S(V)\to \mathbb{R}$ is called \emph{positive} if $L(\sum S(V)^2) \subseteq [0,\infty)$.
\end{itemize}

\begin{rem}\label{above}\ \\
Let $L$ be a {positive} linear functional on $S(V)$ and consider the following conditions.
\begin{enumerate}
\item $L$ determining.
\item There exists $\{x_j : j\in\NN\}$ countable subset of linearly independent vectors of $V$ whose linear span is dense in $(V,\tau)$ such that \eqref{d} holds.
\item The class $C\{\sqrt{|L(x_j^{2d})|}\}$ is quasi-analytic for each $j\in\NN$. 
\item The Carleman condition is fulfilled, i.e., $\sum\limits_{d=1}^\infty\frac{1}{\sqrt[2d]{|L(x_j^{2d})|}}=\infty$ for each $j\in\NN$.
\item The condition \eqref{m} holds for $V_0=span_{\RR}\{x_j : j\in\NN\}$. 
\end{enumerate}
\end{rem}
\noindent Then 1) $\Rightarrow$ 2) $\Rightarrow$ 3) $\Rightarrow$ 4) $\Rightarrow$ 5). It is easy to see that 2) $\Rightarrow$ 3) $\Rightarrow$ 4). For a proof of 1) $\Rightarrow$ 2) see \cite[Remark 6.2, (7)]{GIKM} and for a proof of 4) $\Rightarrow$ 5) see, e.g., \cite[Lemma~0.2 and Theorem~0.3]{M}. \\

Let us now compare Theorems \ref{at most one-general} and \ref{uniqueness-Suslin}. The assumptions on $L$ and $V'$ required in Theorem \ref{uniqueness-Suslin} are more restrictive than the ones in Theorem \ref{at most one-general}. However, Theorem \ref{at most one-general} gives a sufficient determinacy condition only in the case when we know the existence of representing measures supported by some countable union of $V'_\rho$ for $\rho\in S$. Theorem \ref{uniqueness-Suslin} gives instead a sufficient determinacy condition also for the general case when we have no further a priori information on the support of the solution to the moment problem except that it is contained in $V'$.

Note that if $S$ is countable we get that $V'$ is already Suslin. Indeed, for any $i\in\mathbb{N}$ and any $\rho\in S$ the ball $B_i(\rho')$ is a compact metric space (see Section 1) and so it is a Suslin space. Therefore, since each $V_{\rho}'$ is a countable increasing union of such balls, $V' = \bigcup_{\rho \in S} V_{\rho}'$ is a countable union of Suslin spaces and so it is Suslin itself. Hence, when $S$ is countable, Theorem \ref{uniqueness-Suslin} becomes a corollary of Theorem \ref{at most one-general} since the condition that $L$ is determining implies that \eqref{m} holds by Remark~\ref{above}.

As a last remark, we consider the special sort of l.c.\! spaces studied in \cite[Chapter 5, Section 2]{BK}, \cite{BS}, \cite[Section 3]{I} and \cite{IKR} for which existence results for the moment problem on $S(V)$ are known. Namely, $(V, \tau)$ is assumed to be: separable, the projective limit of a family $(H_j)_{j\in J}$ of Hilbert spaces ($J$ is an index set containing~$0$) which is directed by topological embedding such that each $H_j$ is embedded topologically into $H_0$, and nuclear, i.e.,\! for each $j_1\in J$ there exists $j_2\in J$ such that the embedding $H_{j_2}\hookrightarrow H_{j_1}$ is quasi-nuclear. Thus $\tau$ is the l.c.\! topology on $V$ induced by the directed family $S$ of the norms on $V$ which are induced by the embeddings $V \hookrightarrow H_j$, $j \in J$. The topology $\tau$ is usually referred to as the \it projective topology \rm on $V$ and it is clearly a Hausdorff topology.

\begin{theorem}[Berezansky-Kondratiev-\v Sifrin] \label{nuclear} \ \\
Let $(V,\tau)$ be a l.c.\! space of the special sort described above and let $L : S(V) \rightarrow \mathbb{R}$ be a linear functional. Assume that

(1) $L$ is positive;

(2) $L$ is determining.

\noindent Then there exists a $V'-$representing measure for $L$ on $S(V)$.
\end{theorem}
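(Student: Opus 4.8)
The plan is to combine an infinite-dimensional GNS construction with a nuclear (projection) spectral theorem. First I would use positivity of $L$ to turn $\langle p,q\rangle_L := L(pq)$ into a non-negative symmetric bilinear form on $S(V)$; quotienting by its kernel $N_L := \{p\in S(V): L(p^2)=0\}$ and completing yields a Hilbert space $\mathcal H_L$ with distinguished vector $\xi_0$, the class of $1\in S(V)$. Each multiplication $p\mapsto vp$, $v\in V$, descends to a symmetric operator $A_v$ on the dense domain $D := S(V)/N_L$; the $A_v$ commute on $D$ and $v\mapsto A_v\xi_0 = [v]$ is linear, with $\langle \xi_0,\xi_0\rangle = L(1)$ (if $L(1)=0$ then $L\equiv 0$ by Cauchy--Schwarz and there is nothing to prove). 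The degreewise continuity clause (a) of \emph{determining} makes each restriction $L|_{S(V)_k}$ continuous for the quotient seminorms $\overline{\rho}_k$; this is what will let me replace $V$ throughout by the dense countable-dimensional subspace $V_0 := \mathrm{span}_{\RR}\{x_j : j\in\NN\}$ supplied by Remark~\ref{above}, for which $D_0 := S(V_0)/N_L$ stays dense in $\mathcal H_L$ and $\xi_0$ is cyclic for the family $\{A_{x_j}\}_{j\in\NN}$.

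Next I would establish strong commutativity of the self-adjoint closures of the $A_{x_j}$. By Remark~\ref{above}, \emph{determining} forces (for each $j$) the quasi-analytic -- in particular the Carleman -- condition $\sum_{d\ge 1}\frac{1}{\sqrt[2d]{|L(x_j^{2d})|}}=\infty$, equivalently $\sum_{d\ge 1}\frac{1}{\|A_{x_j}^{d}\xi_0\|^{1/d}}=\infty$. Hence $\xi_0$, and with it the linear span $D_0$ of all the vectors $A_{x_{j_1}}\cdots A_{x_{j_k}}\xi_0$, is a total set of joint quasi-analytic vectors for $\{A_{x_j}\}$. Nussbaum's quasi-analytic vector theorem then yields self-adjointness of each $\overline{A_{x_j}}$, and, together with commutativity on the common core $D_0$, pairwise strong commutativity; consequently the $\overline{A_{x_j}}$ possess a joint resolution of the identity $E$ on $\RR^{\NN}$ (with the product Borel structure).

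The crucial step is to push $E$ onto the topological dual $V'\subseteq\RR^{\NN}$ and to turn it into a genuine $V'$-representing measure, and it is here that nuclearity enters. I would check that the characteristic functional $\chi(v):=\langle e^{\mathrm{i} A_v}\xi_0,\xi_0\rangle$ is positive-definite on $V$, satisfies $\chi(0)=L(1)$, and is continuous on $(V,\tau)$ (once more using clause (a)); since $(V,\tau)$ is nuclear, the Bochner--Minlos theorem -- equivalently Berezansky's projection spectral theorem -- produces a (unique) finite Radon measure $\mu$ on $V'$ of total mass $L(1)$ with Fourier transform $\chi$, realized as the push-forward to $V'$ of $\langle E(\cdot)\xi_0,\xi_0\rangle$. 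Differentiating $\chi$ at $0$ -- equivalently, applying the moment formula for $E$, whose moments are finite thanks to the bounds on the $m_d$ -- gives $\int \hat{x}_{j_1}\cdots\hat{x}_{j_k}\,d\mu = \langle A_{x_{j_1}}\cdots A_{x_{j_k}}\xi_0,\xi_0\rangle = L(x_{j_1}\cdots x_{j_k})$, so $\int\hat a\,d\mu=L(a)$ for all $a\in S(V_0)$ by linearity; finally the degreewise $\overline{\tau}_k$-continuity of both sides of this identity, together with the $\tau$-density of $V_0$ in $V$, extends it to all of $S(V)$. Since $\mathrm{supp}\,\mu\subseteq V'$, $\mu$ is the required $V'$-representing measure.

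I expect the main obstacle to be exactly this last step: passing from the abstract joint spectral measure on $\RR^{\NN}$ to a measure carried by the topological dual $V'$. This is precisely where nuclearity of $V$ is indispensable (through Bochner--Minlos / the projection spectral theorem), and one must at the same time keep uniform control of the infinitely many moments -- via the quasi-analytic bounds on $\{m_d\}$ -- so that the resulting measure represents $L$ on all of $S(V)$ rather than merely on $S(V_0)$. Both the initial reduction to $S(V_0)$ and the final extension back to $S(V)$ rely on the degreewise continuity clause (a) of the \emph{determining} hypothesis.
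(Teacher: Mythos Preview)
The paper does not prove this theorem: its entire proof is the citation ``See \cite[Chapter~5, Theorem~2.1]{BK} and \cite{BS}.'' Your sketch is a faithful outline of the argument found in those references --- GNS construction from positivity, essential self-adjointness and strong commutativity of the multiplication operators via Nussbaum's quasi-analytic vector theorem (fed by the Carleman bounds that \emph{determining} entails, as in Remark~\ref{above}), and then the nuclear projection spectral theorem / Bochner--Minlos to land the joint spectral measure on $V'$ rather than merely on $\RR^{\NN}$. So there is no methodological divergence to compare; you have simply unpacked the citation.

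Two places in your outline deserve more care if you actually write it out. First, defining $\chi(v)=\langle e^{\mathrm{i}A_v}\xi_0,\xi_0\rangle$ for \emph{all} $v\in V$ presupposes that every $A_v$ (not just the $A_{x_j}$) is essentially self-adjoint on $D_0$, and that $v\mapsto\chi(v)$ is $\tau$-continuous; clause~(a) of \emph{determining} gives continuity of $L$ on each homogeneous part, but turning this into continuity of $\chi$ (an infinite series of moments) needs the quasi-analytic growth control on $\{m_d\}$ as well, not clause~(a) alone. Second, the passage ``commutativity on the common core $D_0$ $+$ quasi-analytic vectors $\Rightarrow$ strong commutativity'' is correct but is itself a nontrivial theorem (it fails for merely analytic or $C^\infty$ vectors); you should cite it rather than treat it as automatic. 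Finally, note --- as the paper remarks just after the theorem --- that the Berezansky--Kondratiev proof actually yields more: the representing measure is supported in a single $V'_\rho$, a fact your sketch does not capture because you invoke Bochner--Minlos on all of $V'$ rather than tracking which Hilbert--Schmidt embedding is used.
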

\begin{proof} See \cite[Chapter 5, Theorem 2.1]{BK} and \cite{BS}.
\end{proof}
The original proof of Theorem \ref{nuclear} actually shows, by use of the nuclearity assumption, a further fundamental property of the measure $\mu$ representing $L$: $\mu$ is indeed supported in a single $V'_{\rho}$ for some $\rho\in S$ (see \cite[Chapter~5, Section~2, Remark~1, p.72]{BK}). Then Theorem \ref{at most one-general} ensures that there exists a unique measure supported in $V'_{\rho}$ and representing $L$. (Note that under the assumptions of Theorem~\ref{nuclear} we can apply Theorem \ref{at most one-general} because of Remark \ref{above}). This means that thanks to the knowledge of the support of the representing measure provided by the proof of Theorem \ref{nuclear} we get determinacy on this special subset $V'_{\rho}$ of $V'$. This clearly does not guarantee that we have determinacy on the whole $V'$. Hence, to have $V'$-determinacy we need to verify that $V'$ is actually Suslin as required in Theorem~\ref{uniqueness-Suslin} (namely the Suslin assumption cannot be dropped in \cite[Theorem~3.7]{I}).

\section*{Acknowledgments}
We would like to thank the anonymous referee for her or his helpful comments and suggestions. We also express our gratitude to Mehdi Ghasemi and Tobias Kuna for the interesting discussions.


\begin{thebibliography}{1}
\bibitem{BK}
Y.M. Berezansky, Y.G. Kondratiev,
\newblock {\em Spectral methods in infinite-dimensional analysis}. \textbf{Vol. II},
\newblock (Russian), Naukova Dumka, Kiev, 1988.
\newblock English translation: {K}luwer {A}cademic {P}ublishers,
  {D}ordrecht, 1995.
  \bibitem{BS}
Y.M. Berezansky, S.N. {\v{S}}ifrin, {\it A generalized symmetric power moment problem}, (Russian)
{Ukrain. Mat. \v Z.} {\bf 23} (1971), 291--306.
\bibitem{BCR} C. Berg, J.P.R. Christensen, P. Ressel, {\it Positive definite functions on abelian semigroups}, Math. Ann. {\bf 223} (1976), no. 3, 253--274.
\bibitem{Bor-Yng75}
H.J. Borchers, J. Yngvason,
\newblock {\it Integral representations for {S}chwinger functionals and the moment problem over nuclear spaces},
\newblock {Comm. Math. Phys.} \textbf{43} (1975), no.3, 255--271.

\bibitem{GIKM} M. Ghasemi, M. Infusino, S. Kuhlmann, M. Marshall, {\it Moment problem for symmetric algebras of locally convex spaces},  Integral Equations Operator Theory {\bf 90} (2018), no. 3, Art. 29, 19 pp. 
\bibitem{MGHAS} M. Ghasemi, S. Kuhlmann, {\it Closure of the cone of
sums of $2d-$powers in real topological algebras}, {J. Funct.
Anal.} {\bf 264} (2013), no. 1, 413--427.
\bibitem{MGES} M. Ghasemi, S. Kuhlmann, E. Samei, {\it The moment
problem for continuous positive semidefinite linear functionals},
{Arch. Math.} {\bf 100} (2013), no. 1, 43--53.
\bibitem{GKM} M. Ghasemi, S. Kuhlmann, M. Marshall, {\it Moment problem in infinitely many variables}, {Israel J. Math.} {\bf 212} (2016), no. 2, 989--1012. 
\bibitem{GMW} M. Ghasemi, M. Marshall, S. Wagner, {\it Closure of the cone of sums of $2d$-powers in certain weighted $\ell_1$-seminorm topologies}, {Canad. Math. Bull.} {\bf 57} (2014), no.2, 289--302.
\bibitem{Heg75}
G.C. Hegerfeldt,
{\it Extremal decomposition of Wightman functions and of states on nuclear *-algebras by Choquet theory}, 
\newblock {Comm. Math. Phys.} \textbf{45} (1975), no.2, 133--135.
\bibitem{I} M. Infusino, Quasi-analyticity and determinacy of the full moment problem from finite to infinite dimensions, in \emph{Stochastic and Infinite Dimensional Analysis}, {\bf Chap.\!~9}: 161--194, Trends in Mathematics,  Birkh\"auser, 2016. 
\bibitem{IK-probl} M. Infusino, S. Kuhlmann, {Infinite dimensional moment problem: open questions and applications}, in \emph{Ordered Algebraic Structures and Related Topics}, Contemporary Mathematics, \textbf{697}: 187-201, Amer. Math. Soc., Providence, RI, 2017. 
\bibitem{IKR} M.\! Infusino, T.\! Kuna, A.\! Rota, {\it The full infinite dimensional moment problem on semialgebraic sets of generalized functions}, {J.\! Funct.\! Anal.} {\bf267}\! (2014), no.5, 1382--1418. 
\bibitem{Koethe} G. K\"{o}the, \emph{Topological vector spaces}, {\bf vol. I}, {Springer-Verlag}, New York Inc., 1969.
\bibitem{L} J.B. Lasserre, {\it The $K-$moment problem for continuous functionals}, {Trans. Amer. Math. Soc.} {\bf 365} (2013), no. 5, 2489--2504.
\bibitem{M} M. Marshall, {\it Application of localization to the multivariate moment problem II}, Math. Scand. \textbf{120} (2017), no. 1, 124--128.
\bibitem{Rud} W. Rudin, \textit{Functional Analysis}, McGraw-Hill Co, 1973.
\bibitem{Schmu78} K.~Schm\"{u}dgen, {\it Positive cones in enveloping algebras}, {Rep. Math. Phys.} \textbf{14} (1978), no. 3, 385--404. 
\bibitem{Schmu90} K.~Schm\"{u}dgen,
\newblock \textit{Unbounded operator algebras and representation theory,}
\newblock Operator Theory: Advances and Applications {\bf 37}, { Birkh\"auser Verlag}, Basel, 1990.
\end{thebibliography}
\end{document}